\providecommand{\R}{\mathbb R}
\providecommand{\E}{\mathsf E}
\providecommand{\prob}{\mathsf P}
\providecommand{\e}{\mathrm e}
\newtheorem{thm}{Theorem}
\newtheorem{lem}[thm]{Lemma}
\newtheorem{prop}[thm]{Proposition}
\theoremstyle{remark}
\newtheorem*{rem}{Remark}
\title{Regularization in $L_1$ for the Ornstein--Uhlenbeck semigroup}
\author{
Joseph Lehec
\footnote{CEREMADE (UMR CNRS 7534) Universit\'e Paris--Dauphine.}
}
\begin{document}
\maketitle

\begin{abstract}
Let $\gamma_n$ be the standard 
Gaussian measure on $\mathbb R^n$
and let $(Q_t)$
be the Ornstein--Ulhenbeck semigroup. 
Eldan and Lee recently established that
for every non--negative function $f$ 
of integral $1$ and any time $t$ 
the following tail inequality holds true:
 \[
\gamma_n ( \{ Q_t f > r \} )
\leq C_t \, \frac{ (\log \log r)^4 }{ r \sqrt{ \log r} } , 
\quad \forall r>1
\]
where $C_t$ is a constant depending on $t$
but not on the dimension. 
The purpose of the present paper
is to simplify parts of their argument 
and to remove the $(\log \log r)^4$ factor.  
\end{abstract}
\section{Introduction}
Let $\gamma_n$ be the standard Gaussian measure on 
$\R^n$ and let $(Q_t)$ be the Ornstein--Ulhenbeck 
semigroup: 
for every test function $f$
\begin{equation}
\label{OU}
Q_t f ( x ) 
= \int_{\R^n} f \left( \e^{-t} x + \sqrt{ 1 - \e^{-2t} } \, y \right) \, \gamma_n (dy) .
\end{equation}
Nelson~\cite{nelson} established that
if $p>1$ and $t>0$ then $Q_t$ is a
contraction from $L_p (\gamma_n)$ to $L_q (\gamma_n)$ for some 
$q >p$, namely for 
\[
q  =  1 + \e^{2t}  (p-1) . 
\]
The semigroup $(Q_t)$ is said to be 
\emph{hypercontractive}. This turns out to be equivalent 
to the logarithmic Sobolev inequality
(see the classical article by Gross~\cite{gross}). 
In this paper we establish a regularity
property of $Q_t f$ assuming only that $f$ is in 
$L^1 ( \gamma_n)$. 

Let $f$ be a non--negative function satisfying 
\[
\int_{\R^n} f \, d \gamma_n = 1 ,
\]
and let $t>0$. Since $Q_t f \geq 0$ and  
\[
\int_{\R^n} Q_t f \, d\gamma_n = \int_{\R^n} f \, d\gamma_n = 1,
\]
Markov inequality gives
\[
\gamma_n \left( \{ Q_t f \geq r \} \right) \leq \frac 1 r ,
\]
for all $r\geq 1$. 
Now Markov inequality is only sharp 
for indicator functions and $Q_t f$ 
cannot be an indicator function, 
so it may be the case that this inequality 
can be improved. More precisely 
one might conjecture that
for any fixed $t>0$ 
(or at least for $t$ large enough)
there exists a function 
$\alpha$ satisfying
\[
\lim_{r\to +\infty} \alpha (r) = 0 
\]
and
\begin{equation}\label{conj-tal}
\gamma_n \left( \{ Q_t f \geq r \} \right) 
\leq \frac { \alpha (r) } r ,
\end{equation}
for every $r\geq 1$ and for every non--negative function 
$f$ of integral $1$. 
The function $\alpha$ should 
be independent of the dimension $n$, 
just as the hypercontractivity 
result stated above. 
Such a phenomenon was actually conjectured
by Talagrand in~\cite{talagrand}
in a slightly different context. He conjectured
that the same inequality holds true
when $\gamma_n$ is replaced by 
the uniform measure on the discrete cube $\{-1,1\}^n$
and the Orstein--Uhlenbeck semigroup
is replaced by the semigroup associated to the
random walk on the discrete cube.
The Gaussian version of the conjecture 
would follow from Talagrand's discrete version
by the central limit theorem. In this 
paper we will only focus on the Gaussian case. 

In~\cite{ball-etc}, Ball, Barthe, Bednorz,
Oleszkiewicz and Wolff 
showed that in dimension $1$
the inequality~\eqref{conj-tal}
holds with decay
\[
\alpha (r) = \frac{ C }{ \sqrt{ \log r } } ,
\]
where the constant $C$ depends on the time 
parameter $t$. 
Moreover the authors provide an example 
showing that the $1/\sqrt{\log r}$ decay is sharp. 
They also have a result in higher dimension 
but they loose a factor $\log \log r$ and,
more importantly, their constant $C$ then tends
to $+\infty$ (actually exponentially fast) 
with the dimension. The deadlock was broken 
recently by Eldan and Lee who showed
in~\cite{eldan-lee} that~\eqref{conj-tal}
holds with function 
\[
\alpha (r) = C \, 
\frac{ ( \log  \log r )^4 }{ \sqrt{ \log r }} ,
\]
with a constant $C$ that is independent 
of the dimension. Again up to the $\log \log$ 
factor the result is optimal. 

In this article we revisit the argument
of Eldan and Lee. We shall simplify 
some steps of their proof and short cut some 
others. As a result, we are able to remove
the extra $\log \log$ factor. We would like 
to make clear though that this note does 
not really contain any new idea 
and that the core of our 
argument is all Eldan and Lee's. 
\section{Main results}
Recall that $\gamma_n$ is the standard Gaussian measure
and that $(Q_t)$ is the Ornstein--Uhlenbeck semigroup,
defined by~\eqref{OU}. Here is our main result. 
\begin{thm}\label{thm-main}
Let $f$ be a non--negative function on $\R^n$ 
satisfying $\int_{\R^n} f \, d\gamma_n = 1$ 
and let $t>0$. Then for every $r> 1$
\[
\gamma_n \left( \{ Q_t f > r \} \right) 
\leq  C\, \frac {  \max ( 1 , t^{-1} ) } {r \sqrt{\log r} } ,
\]
where $C$ is a universal constant.
\end{thm}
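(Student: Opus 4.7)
The plan is to use a stochastic representation along the Ornstein--Uhlenbeck flow together with a martingale argument. Let $(X_s)_{s \geq 0}$ denote the stationary Ornstein--Uhlenbeck process on $\R^n$, so that $X_s \sim \gamma_n$ for every $s$, and fix $t > 0$. The process
\[
M_s := Q_{t-s} f(X_s), \qquad s \in [0,t],
\]
is a non--negative martingale with $M_0 = Q_t f(X_0)$ and $M_t = f(X_t)$, so the inequality to prove becomes $\prob(M_0 > r) \leq C \max(1,t^{-1})/(r \sqrt{\log r})$.

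I would then apply It\^o's formula to $\log M$, writing $dM_s = M_s\, \nu_s \cdot dB_s$ for some adapted $\R^n$--valued process $\nu$. A Gaussian integration by parts identifies $\nu_s$, up to the factor $\e^{-(t-s)}/\sqrt{1-\e^{-2(t-s)}}$, with the barycenter of the tilted probability measure on $\R^n$ having density proportional to $y \mapsto f(\e^{-(t-s)} X_s + \sqrt{1-\e^{-2(t-s)}}\, y)$ against $\gamma_n$. The standard dual estimate $|\E_\mu Y|^2 \leq 2 H(\mu \mid \gamma_n)$ (obtained by optimising Gibbs' inequality against linear functions) then controls $|\nu_s|^2$ pointwise by a relative entropy, while the integrable singularity $\e^{-2(t-s)}/(1-\e^{-2(t-s)}) \asymp (t-s)^{-1}$ near $s = t$ is the source of the $\max(1,t^{-1})$ prefactor. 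It\^o also yields the key identity
\[
\log M_0 = \log f(X_t) + \tfrac12 \int_0^t |\nu_s|^2 \, ds - \int_0^t \nu_s \cdot dB_s .
\]

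After a preliminary truncation of $f$, the event $\{M_0 > r\}$ forces either the quadratic variation $V := \int_0^t |\nu_s|^2 \, ds$ to be of order $\log r$, or the continuous martingale $N_t := \int_0^t \nu_s \cdot dB_s$ to be comparably large and negative. A Bernstein--type estimate of the form $\prob(N_t \leq -\lambda,\, V \leq v) \leq \exp(-\lambda^2/(2v))$, combined with the naive Markov bound $\prob(M_0 > r) \leq 1/r$, converts this dichotomy into the desired Gaussian concentration estimate once $\lambda$ and $v$ are optimised against $\log r$.

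The principal obstacle is extracting the sharp $1/\sqrt{\log r}$ improvement with no logarithmic loss. Since the entropy controlling $|\nu_s|^2$ depends on $f$ rather than on the threshold $r$, a crude optimisation leaks polylogarithmic factors, and this is precisely the point at which the spurious $(\log\log r)^4$ in Eldan--Lee appears. The simplification I would pursue, following their overall strategy, is to stop the martingale $M$ at a well--chosen level set $\{M_s \leq R\}$ and tailor the exponential martingale $\exp(\lambda N_s - \tfrac{\lambda^2}{2} \langle N \rangle_s)$ to the event $\{M_0 > r\}$ via a change of measure, so that the bracket is absorbed by the Gaussian tail rather than by an additional entropy estimate, cleanly delivering the rate stated in the theorem.
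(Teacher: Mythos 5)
Your construction differs substantially from the paper's. You work with the stationary Ornstein--Uhlenbeck process and the Doob martingale $M_s = Q_{t-s}f(X_s)$, whereas the paper first replaces $Q_t f$ by a general positive $g$ with $\nabla^2\log g \geq -\beta\,\mathrm{id}$ (Lemma~\ref{lemm} furnishes $\beta = 1/(2t)$) and then runs the F\"ollmer drift $dX_s = dB_s + \nabla\log P_{1-s}g(X_s)\,ds$ on $[0,1]$, whose terminal point has law $\mu = g\,\gamma_n$ rather than $\gamma_n$. The paper's engine is a perturbed process $X^\delta_s = X_s + \delta\int_0^{T\wedge s} v_u\,du$ with $T$ the first time $P_{1-s}g(X_s)$ reaches $r$: via Girsanov and Pinsker one shows $\mu^\delta$ lies within $\delta\sqrt{(\beta+1)\log r}$ of $\mu$ in total variation, while the semi-convexity of $\log g$ forces $g(X^\delta_1)$ to beat $g(X_1)^{1+2\delta}\e^{Y}$ for a controllable error $Y$; telescoping these two facts over scales $\e^k r$ gives the clean $1/\sqrt{\log r}$. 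The stopping time $T$, which caps $\E\bigl[\int_0^T |v_s|^2\,ds\bigr]$ at $2\log r$, and the semi-convexity hypothesis are both essential, and both are absent from your sketch.

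The closing step of your argument has a genuine gap. Your It\^o identity reads $\log M_0 = \log f(X_t) - N_t + \tfrac12 V$ with $N_t = \int_0^t \nu_s\cdot dB_s$ and $V = \langle N\rangle_t$, and you propose to split $\{M_0>r\}$ according to $\{V\leq v\}$ or not. On $\{V\leq v\}$ the Bernstein bound on $\{N_t\leq -\lambda\}$ gives $\exp(-\lambda^2/(2v))$, and on the complementary piece $f(X_t) > r\e^{-\lambda - v/2}$, so Markov (using $X_t\sim\gamma_n$) gives $\e^{\lambda+v/2}/r$. But to make $\exp(-\lambda^2/(2v)) \lesssim 1/(r\sqrt{\log r})$ you need $\lambda \gtrsim \sqrt{v\log r}$, and then $\e^{\lambda+v/2}/r \geq \e^{\sqrt{v\log r}}/r \gg 1/(r\sqrt{\log r})$ for any fixed $v>0$; the two requirements are incompatible, so the naive Markov leg of your dichotomy is too weak. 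You also give no bound on $\prob(M_0>r,\ V>v)$: the only estimate in reach, $\E[V]/v$, involves a quantity depending on $f$ and not on $r$, so it cannot produce decay in $r$. Your final paragraph acknowledges the difficulty but substitutes a plan for a proof (``stop at a level set,'' ``absorb the bracket into the Gaussian tail''); this is precisely where the paper's perturbation and Pinsker estimate do the real work, and without a device of comparable force your outline would at best recover a bound carrying an extraneous polylogarithmic factor, not the stated theorem.
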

As in Eldan and Lee's paper,
the Ornstein--Ulhenbeck
semigroup only plays
a r\^ole through the following 
lemma. 
\begin{lem}\label{lemm}
Let $f \colon \R^n \to \R_+$. For every $t>0$, we have
\[
\nabla^2 \log ( Q_t f) \geq - \frac 1 {2t} \, \mathrm{id} ,  
\]
pointwise. 
\end{lem}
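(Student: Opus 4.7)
The plan is to rewrite $Q_t f$ so that its logarithm becomes, up to a quadratic term in $x$, a log--Laplace transform of a non--negative measure; the convexity of log--Laplace transforms then gives a lower bound on $\nabla^2 \log(Q_t f)$.

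Setting $a = e^{-t}$ and $\sigma^2 = 1 - e^{-2t}$, I would perform the change of variables $y = (z - ax)/\sigma$ in the definition (\ref{OU}) to get
\[
Q_t f(x) = \frac{1}{(2\pi\sigma^2)^{n/2}} \int_{\R^n} f(z)\, \e^{-|z-ax|^2/(2\sigma^2)} \, dz.
\]
Expanding $|z-ax|^2 = |z|^2 - 2ax\cdot z + a^2|x|^2$ and pulling out what depends only on $x$ or only on $z$,
\[
Q_t f(x) = c\, \e^{- a^2|x|^2/(2\sigma^2)} \int_{\R^n} h(z)\, \e^{(a/\sigma^2)\, x\cdot z}\, dz,
\]
where $h(z) = f(z)\,\e^{-|z|^2/(2\sigma^2)} \geq 0$ and $c$ is a positive constant independent of $x$. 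Taking logarithms,
\[
\log Q_t f(x) = -\frac{a^2}{2\sigma^2}\, |x|^2 + \Lambda\!\left(\tfrac{a}{\sigma^2}\, x\right) + \text{const},
\]
with $\Lambda(u) = \log\int h(z)\,\e^{u\cdot z}\, dz$.

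The key observation is that $\Lambda$ is convex on $\R^n$: this is the classical convexity of the log--Laplace transform of a non--negative measure, an immediate consequence of H\"older's inequality. Consequently $\nabla^2 \Lambda \geq 0$ and
\[
\nabla^2 \log Q_t f(x) \geq -\frac{a^2}{\sigma^2}\, \mathrm{id} = -\frac{1}{\e^{2t}-1}\, \mathrm{id}.
\]
The proof is then finished by the elementary inequality $\e^{u} \geq 1 + u$ for $u \geq 0$ applied at $u = 2t$, which yields $\e^{2t} - 1 \geq 2t$, hence $1/(\e^{2t}-1) \leq 1/(2t)$.

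There is no real obstacle: the only nontrivial ingredient is the convexity of the log--Laplace transform, which is entirely standard, and the rest is a direct computation together with an elementary scalar inequality. One minor point to watch is that $\Lambda$ may take the value $+\infty$, but on the interior of its domain $\nabla^2 \Lambda \geq 0$ in the usual sense, and since $h$ decays like a Gaussian the relevant integrals are finite for all $x$, so there is no issue with the pointwise statement.
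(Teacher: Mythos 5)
Your proof is correct and follows essentially the same route as the paper's terse sketch: both rewrite $Q_t f(x)$ as a Gaussian convolution evaluated at $\e^{-t}x$ and observe that the Hessian of its logarithm splits into the harmless quadratic part $-\frac{a^2}{\sigma^2}\,\mathrm{id}$ plus a positive semi-definite remainder. You isolate the remainder as $\frac{a^2}{\sigma^4}\,\nabla^2\Lambda$ and invoke convexity of the log--Laplace transform; the paper instead differentiates $\log(f*g)$ directly and uses Cauchy--Schwarz to recognize the remainder as a covariance matrix --- these are the same observation in different clothing. Your version also makes explicit the slightly sharper bound $-\frac{1}{\e^{2t}-1}\,\mathrm{id} \geq -\frac{1}{2t}\,\mathrm{id}$.
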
  
\begin{proof}
This really straightforward:
observe that~\eqref{OU} can be rewritten as
\[
Q_t f ( x )  = ( f \ast g_{1-\rho} ) ( \rho \, x ) ,
\]
where $\rho= \e^{-t}$ and $g_{1-\rho}$ is the
density of the Gaussian measure with mean $0$ 
and covariance $(1-\rho) \mathrm{id}$. Then 
differentiate twice and use the Cauchy--Schwarz
inequality. Details are left to the reader. 
\end{proof}
What we actually prove
is the following, 
where $Q_t$ does not 
appear anymore. 
\begin{thm}\label{thm1}
Let $f$ be a positive function on $\R^n$ 
satisfying $\int_{\R^n} f \, d\gamma_n = 1$. 
Assume that $f$ is smooth and satisfies
\begin{equation}\label{hyp1}
\nabla^2 \log f \geq - \beta \, \mathrm{id} 
\end{equation}
pointwise, for some $\beta\geq 0$.
Then for every $r> 1$
\[
\gamma_n \left( \{ f > r \} \right) 
\leq \frac { C\, \max ( \beta , 1 ) } { r \sqrt{\log r} } ,
\]
where $C$ is a universal constant. 
\end{thm}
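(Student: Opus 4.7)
The approach is stochastic-analytic, in the spirit of Eldan--Lee. The key observation is that the target rate $1/(r\sqrt{\log r})$ is precisely (up to a constant) the Gaussian tail $\prob(Z > \sqrt{2\log r})$ for $Z \sim \mathcal N(0,1)$. The plan is therefore to represent $\log f$ at a suitable random point as a time-changed one-dimensional Brownian motion, so that the problem reduces to a 1D Gaussian tail estimate; the semi-log-concavity hypothesis is then used to control the random time change.

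Two preliminary reductions: Markov's inequality gives $\gamma_n(\{f>r\}) \leq 1/r$, which is already stronger than the claimed bound whenever $\sqrt{\log r} \leq C\max(\beta,1)$, so we may assume $\log r$ is large compared with $\max(\beta,1)^2$. By a standard approximation we may also assume $f$ is smooth with sufficient decay so the manipulations below are rigorous.

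Now introduce the Föllmer drift. Let $(B_t)_{t\in[0,1]}$ be standard Brownian motion in $\R^n$ starting at $0$ and let $P_s$ denote the heat semigroup with generator $\tfrac12\Delta$. Then $M_t := P_{1-t}f(B_t)$ is a positive martingale with $M_0 = \int f\,d\gamma_n = 1$, satisfying $dM_t = M_t\, v_t\cdot dB_t$ where $v_t := \nabla \log P_{1-t}f(B_t)$. Solving this exponential martingale equation gives
\[
\log f(B_1) = \int_0^1 v_s\cdot dB_s - \tfrac{1}{2} \int_0^1 |v_s|^2\,ds = N_1 - \tfrac{1}{2}\tau,
\]
with $N_t := \int_0^t v_s\cdot dB_s$ and $\tau := \langle N\rangle_1 = \int_0^1 |v_s|^2\,ds$. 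By Dambis--Dubins--Schwarz, $N_1 = W_\tau$ for some standard real Brownian motion $W$; since $B_1\sim\gamma_n$ under the Wiener measure, we obtain
\[
\gamma_n(\{f>r\}) = \prob\bigl(W_\tau - \tfrac{1}{2}\tau > \log r\bigr).
\]

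For a \emph{deterministic} value $\tau = T$, the AM-GM inequality gives $\log r + T/2 \geq \sqrt{2T\log r}$, whence $\prob(W_T > \log r + T/2) \leq \prob(W_1 > \sqrt{2\log r}) \leq \frac{1}{r\sqrt{4\pi \log r}}$ by the usual Mills-ratio estimate, exactly the bound sought; the same conclusion follows when $\tau$ is random but independent of $W$. The real difficulty is that $\tau$ is adapted to $B$ and hence strongly coupled with $W$ through the Föllmer SDE. This is where the hypothesis enters: a Riccati-type evolution of the Hessian along the heat semigroup yields $\nabla^2 \log P_{1-t}f \geq -((1-t) + 1/\beta)^{-1}\mathrm{id}$, and Itô's formula applied to $v_t$ gives $dv_t = H_t(dB_t - v_t\,dt)$ with $H_t := \nabla^2 \log P_{1-t}f(B_t)$; the uniform lower bound on $H_t$ then provides quantitative control of $|v_t|^2$, hence of the random time $\tau$. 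The main obstacle --- and the technical core of the Eldan--Lee argument that this paper simplifies --- is to combine the 1D Gaussian tail with this control, presumably via a Girsanov-type change of measure or an optional-stopping argument that decouples $\tau$ from $W$ in the regime $\{W_\tau - \tau/2 > \log r\}$, so as to conclude the sharp bound $C\max(\beta,1)/(r\sqrt{\log r})$.
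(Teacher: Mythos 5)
Your setup is fine as far as it goes---the Föllmer martingale $M_t=P_{1-t}f(B_t)$, the exponential representation $\log f(B_1)=N_1-\tfrac12\tau$, the reduction to $\prob(W_\tau-\tau/2>\log r)$ via Dambis--Dubins--Schwarz, and the observation that for deterministic $\tau$ the Gaussian tail gives precisely $1/(r\sqrt{\log r})$. But the crux of the theorem, namely how to handle the strong coupling between the time change $\tau$ and the Brownian motion $W$, is exactly what you leave undone: ``presumably via a Girsanov-type change of measure or an optional-stopping argument that decouples $\tau$ from $W$'' is not an argument, it is a restatement of the problem. In fact for a general adapted $\tau$ the bound $\prob(W_\tau-\tau/2>\log r)\leq C/(r\sqrt{\log r})$ is false (one can arrange $\tau$ to wait for $W$ to be large), so the entire content of the theorem lies in extracting quantitative control from the hypothesis $\nabla^2\log f\geq-\beta\,\mathrm{id}$, and you have not shown how to do that. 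The Riccati bound $\nabla^2\log P_{1-t}f\geq-((1-t)+1/\beta)^{-1}\mathrm{id}$ you invoke is correct but is never used; it does not by itself bound $\tau$ or decouple it from $W$.

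The paper resolves this by a different route. It does not work under the Wiener measure with $B_1\sim\gamma_n$; instead it considers the tilted Föllmer process $X_t$ (for which $X_1\sim\mu=f\gamma_n$) and the stopping time $T$ at which $P_{1-t}f(X_t)$ first hits $r$. It then introduces a perturbation $X_t^\delta=X_t+\delta\int_0^{T\wedge t}v_s\,ds$ with law $\mu^\delta$, and proves two inequalities: a total-variation bound $d_{TV}(\mu,\mu^\delta)\leq\delta\sqrt{(\beta+1)\log r}$ (via relative entropy, the martingale property of $v_t$, optional stopping, and Pinsker), and a comparison $\prob(f(X_1^\delta)\leq r^{1+2\delta}e^{-4})\leq\prob(f(X_1)\leq r)+(\beta+4)\delta^2\log r$ (via the semi-log-concavity applied to $X_1^\delta-X_1$ and a Girsanov estimate controlling the error terms). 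With $\delta\asymp1/\log r$ these combine to give the slice bound $\prob(f(X_1)\in(r,er])\leq C\max(\beta,1)/\sqrt{\log r}$, and the theorem follows by summing this over the geometric scales $(e^kr,e^{k+1}r]$. That slicing/summation step is itself an essential reduction (Theorem~\ref{thm1} is deduced from Theorem~\ref{thm2}, not proved directly) and is entirely absent from your proposal. To turn your sketch into a proof you would need to supply the decoupling mechanism you allude to, or else abandon the DDS route and carry out a two-measure comparison of the kind the paper actually uses.
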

Obviously, Theorem~\ref{thm1}
and Lemma~\ref{lemm} altogether
yield Theorem~\ref{thm-main}.

Let us comment on the optimality
of Theorem~\ref{thm-main} and Theorem~\ref{thm1}. 
In dimension $1$, consider the function 
\[
f_\alpha (x) = \e^{ \alpha x - \alpha^2 /2 } . 
\]
Observe that $f_\alpha \geq 0$ and that $\int_\R f_\alpha \, d\gamma = 1$. 
Note that for every $t\geq 1$ we have
\[
\gamma_1 \left( [t,+\infty) \right) \geq \frac{ c\, \e^{ -t^2/2 } } t , 
\]
where $c$ is a universal constant.
So if $\alpha >0$ and $r\geq \e$ then
\[
\gamma_1 \left( \{ f_\alpha \geq r \} \right) \geq 
\frac{ c\, \exp \left(  - \frac 12 \left( \frac {\log r} \alpha + \frac \alpha 2 \right)^2 \right) }
 { \frac {\log r} \alpha + \frac \alpha 2 } . 
\]
Choosing $\alpha = \sqrt{ 2 \log r}$ we get
\[
\gamma_1 \left( \{ f_\alpha \geq r \} \right) \geq  
\frac{ c' }{ r \sqrt{ \log r} } . 
\]
Since $(\log f_\alpha)'' = 0$ this shows 
that the dependence in $r$ in Theorem~\ref{thm1}
is sharp. Actually this example also shows that 
the dependence in $r$ in Theorem~\ref{thm-main}
is sharp. Indeed, it is easily seen that
\[
Q_t f_\alpha = f_{\alpha \e^{-t}} ,
\]
for every $\alpha\in \R$ and $t>0$. This implies that
$f_\alpha$ always belongs to the image $Q_t$.  
Of course, this example also works in higher dimension: 
just replace $f_\alpha$ by 
\[
f_u (x) = \e^{ \langle u , x \rangle - \vert u \vert^2 /2 } 
\]
where $u$ belongs to $\R^n$. 
\begin{thm}\label{thm2}
Let $X$ be a random vector having density $f$ 
with respect to the Gaussian measure, and assume that
$f$ satisfies~\eqref{hyp1}. Then for every $r>1$ 
\[
\prob \left( f(X) \in (r,\e\,r] \right) 
\leq C \, \frac {\max (\beta,1)} {\sqrt{\log r}} . 
\]
\end{thm}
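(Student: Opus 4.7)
The plan is to deploy the F\"ollmer drift / stochastic localization representation of the measure $f\gamma_n$, in the spirit of Eldan and Lee. Let $(B_t)_{t \in [0,1]}$ be a standard Brownian motion with $B_0 = 0$, and let $P_t$ denote the heat semigroup. The process $M_t = P_{1-t}f(B_t)$ is a positive martingale with $M_0 = 1$. Changing probability via $d\prob'/d\prob = M_1$ forces $B_1$ to have law $f\gamma_n$ under $\prob'$, and Girsanov turns $B$ into a diffusion
\[
dB_t = v_t \, dt + dW_t, \qquad v_t = \nabla \log P_{1-t}f(B_t),
\]
driven by a $\prob'$-Brownian motion $W$. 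Setting $X := B_1$ produces the coupling we want.

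It\^o applied to $\log M_t$ yields the pathwise identity, under $\prob'$,
\[
\log f(X) = A + \mathcal M, \qquad A = \tfrac12 \int_0^1 |v_s|^2 \, ds, \qquad \mathcal M = \int_0^1 v_s \cdot dW_s,
\]
where $\mathcal M$ is a continuous martingale of bracket $2A$. By Dambis--Dubins--Schwarz, $\mathcal M = N_{2A}$ for some standard Brownian motion $N$. The theorem then reduces to the anti-concentration estimate
\[
\prob'\bigl( A + N_{2A} \in (\log r, \log r + 1] \bigr) \leq \frac{C\,\max(\beta,1)}{\sqrt{\log r}}.
\]

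Write $a := \log r$. Split on the event $\{A \geq a/2\}$. On its complement, exceeding $a$ forces $N_{2A} > a/2$, hence $\sup_{u \leq a}|N_u| > a/2$, which by a standard Brownian maximal inequality has probability $\leq 2 \e^{-a/8}$, negligible. On $\{A \geq a/2\}$, the martingale has quadratic variation at least $a$; if $A$ and $N$ were independent, conditioning on $A$ and using the Gaussian density bound $\tfrac{1}{\sqrt{4\pi A}} \leq \tfrac{C}{\sqrt{a}}$ would immediately yield the result.

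The main obstacle --- and the heart of the argument --- is that $A$ and $N$ are genuinely coupled through the driving Brownian motion. The hypothesis $\nabla^2 \log f \geq -\beta \, \mathrm{id}$ enters precisely here: by a Bakry--\'Emery-type computation it propagates to a one-sided bound on $\nabla v_t = \nabla^2 \log P_{1-t}f(B_t)$, which controls how sensitively $|v_t|^2$ (and hence $A$) reacts to perturbations of the path. A stopping-time argument, freezing the past at the moment $A$ first hits an appropriate level, should then produce a conditional distribution for the remaining martingale increment that is effectively Gaussian with variance of order $\log r$, making the Gaussian density bound applicable. Carrying out this decoupling with constant \emph{linear} in $\max(\beta,1)$ and without a parasitic $\log\log$ factor is the technical core of the proof, and is where I expect the simplification over Eldan--Lee to live.
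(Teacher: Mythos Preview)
Your setup --- the F\"ollmer drift, the identity $\log f(X_1) = A + \mathcal M$ with $\mathcal M$ of bracket $2A$, and the observation that the hypothesis on $\nabla^2\log f$ controls $\nabla v_t$ --- matches the paper exactly. The divergence comes at the anti-concentration step, and there your proposal has a real gap.

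The claim that ``freezing the past at the moment $A$ first hits an appropriate level'' leaves a remaining martingale increment that is ``effectively Gaussian with variance of order $\log r$'' is not justified and, as stated, is not correct. After any such stopping time $\tau$, the increment $\mathcal M_1 - \mathcal M_\tau$ has quadratic variation $2(A_1 - A_\tau)$, which is still random and still coupled to the increment itself through the future of the driving Brownian motion; conditioning on $\mathcal F_\tau$ does not decouple them, and nothing gives a lower bound on $A_1 - A_\tau$ of order $\log r$. This coupling is exactly the obstruction that produced the $(\log\log r)^4$ loss in Eldan--Lee, and your sketch does not supply an idea to overcome it --- you have located the difficulty but not resolved it.

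The paper does not attempt any DDS/Gaussian-density argument. It keeps the Eldan--Lee perturbation: with $T$ the first time $P_{1-t}f(X_t)$ hits $r$, set $X_1^\delta = X_1 + \delta\int_0^T v_s\,ds$ and let $\mu^\delta$ be its law. Two estimates are proved. First, $d_{TV}(\mu,\mu^\delta)\leq \delta\sqrt{(\beta+1)\log r}$, via a bound on $\mathrm H(\mu^\delta\mid\mu)$; the hypothesis $\nabla^2\log f\geq -\beta\,\mathrm{id}$ enters through the pointwise inequality $\log f(X_1^\delta)\geq \log f(X_1)+\langle v_1, X_1^\delta-X_1\rangle -\tfrac{\beta}{2}|X_1^\delta-X_1|^2$. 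Second, $\prob(f(X_1^\delta)\leq r^{1+2\delta}\e^{-4}) \leq \prob(f(X_1)\leq r) + (\beta+4)\delta^2\log r$, which says the perturbation tends to push $f$ upward. Choosing $\delta = 5/(2\log r)$ and combining the two gives $\prob(f(X_1)\in(r,\e r])\leq C\max(\beta,1)/\sqrt{\log r}$ directly. The simplification over Eldan--Lee lives in cleaner proofs of these two propositions (in particular a Girsanov trick replacing a second-moment computation), not in replacing the perturbation scheme by a time-change argument.
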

Theorem~\ref{thm2} easily yields Theorem~\ref{thm1}.
\begin{proof}[Proof of Theorem~\ref{thm1}]
Let $G$ be standard Gaussian 
vector on $\R^n$
and let $X$ 
be a random vector having density $f$ 
with respect to $\gamma_n$. Then using 
Theorem~\ref{thm2}
\[
\begin{split}
\prob [ f(G) > r ] 
& = \sum_{k=0}^{+\infty} \prob \left( f(G) \in (e^k r , e^{k+1} r ] \right) \\
& \leq  \sum_{k=0}^{+\infty}  
 (\e^k r)^{-1} \E \left[ f(G) \, \mathbf 1_{ \{ f(G) \in (e^k r , e^{k+1} r ] \} } \right] \\
& =  \sum_{k=0}^{+\infty}  
 (\e^k r)^{-1} \prob \left( f(X) \in (e^k r , e^{k+1} r ] \right) \\
& \leq  \sum_{k=0}^{+\infty}  
 (\e^k r)^{-1} \, C \,  \frac { \max ( \beta , 1 )  }{ \sqrt{ \log( \e^k r ) } }  \\
& \leq C \, \frac \e {\e-1} \, \frac 1 r \,\frac { \max(\beta,1) }{ \sqrt{ \log r } }  ,
\end{split}
\]
which is the result. 
\end{proof}
The rest of the note is devoted to the proof of Theorem~\ref{thm2}. 
\section{Preliminaries: the stochastic construction}
Let $\mu$ be a probability measure on $\R^n$ 
having density $f$ with respect to
the Gaussian measure.
We shall assume that $f$ is
bounded away from $0$, that $f$ is $\mathcal C^2$
and that $\nabla f$ and $\nabla^2 f$ are bounded. 
A simple density argument shows that 
we do not lose generality by adding these 
technical assumptions.

Eldan and Lee's argument is based on a stochastic
construction which we describe now.
Let $(B_t)$ be a standard $n$--dimensional Brownian motion
and let $(P_t)$ be the associated semigroup:
\[
P_t h (x) = \E [ h ( x+B_t) ] , 
\]
for all test functions $h$.
Note that $(P_t)$ is the heat semigroup, 
not the Ornstein--Ulhenbeck semigroup.
Consider the stochastic differential equation
\begin{equation}\label{defX}
\left\{
\begin{array}{l}
X_0  = 0 \\
d X_t  = d B_t +  
\nabla \log ( P_{1-t} f ) (X_t) \, dt , \quad t\in [0,1] .
\end{array}
\right.
\end{equation}
The technical assumptions made on $f$ 
insure that the map
\[
x \mapsto \nabla \log P_{1-t} f (x)
\]  
is Lipschitz, with a Lipschitz norm that does not
depend on $t\in [0,1]$. So the equation~\eqref{defX}
has a strong solution $(X_t)$. 
In our previous work~\cite{lehec}
we study the process $(X_t)$ in details 
and we give some applications 
to functional inequalities. 
Let us recap here some of these 
properties and refer to~\cite[section 2.5]{lehec}
for proofs. Recall that if $\mu_1,\mu_2$
are two probability measures, the relative 
entropy of $\mu_1$ with respect to $\mu_2$ 
is defined by
\[
\mathrm H ( \mu_1 \mid \mu_2 ) 
= \int \log \left( \frac{ d \mu_1 }{ d \mu_2 } \right) \, d \mu_1 ,
\]  
if $\mu_1$ is absolutely continuous with respect to $\mu_2$ 
(and $\mathrm H ( \mu_1 \mid \mu_2 ) = +\infty$ otherwise).
Also in the sequel we call \emph{drift} any process
$(u_t)$ taking values in $\R^n$ which is
adapted to the natural filtration of $(B_t)$
(this means that $u_t$ depends only on 
$(B_s)_{s\leq t}$) and satisfies
\[
\int_0^1 \vert u_t \vert^2 \, ds < +\infty. 
\]
Let $(v_t)$ be the drift
\[
v_t = \nabla \log P_{1-t} f ( X_t ) .
\]
Using It\^o's formula it is easily seen 
that
\[
d \log P_{1-t} f (X_t) = 
\langle v_t , d B_t \rangle + \frac 12 \vert v_t \vert^2 \, dt. 
\]
Therefore, for every $t\in [0,1]$
\begin{equation}\label{fvt}
P_{1-t} f ( X_t ) = 
\exp \left( \int_0^t \langle v_s , dB_s \rangle 
+ \frac 12 \int_0^t \vert v_s \vert^2 \,  ds \right) . 
\end{equation}
Combining this with the Girsanov change of measure theorem
one can show that the random vector $X_1$ has law $\mu$ 
(again we refer to~\cite{lehec} for details).
Moreover we have the equality
\begin{equation}\label{entropy1}
\mathrm H ( \mu \mid \gamma_n ) 
= \frac 12 \E \left[ \int_0^T \vert v_s \vert^2 \, ds \right] .  
\end{equation}
Also, if $(u_t)$ is any drift
and if $\nu$ is the law of 
\[
B_1 + \int_0^1 u_t \, dt ,
\]
then 
\begin{equation}\label{entropy11}
\mathrm H ( \nu \vert \gamma_n ) 
\leq \frac 12 \E \left[ \int_0^T \vert u_s \vert^2 \, ds \right] .  
\end{equation}
So the drift $(v_t)$ is in some sense optimal. 
Lastly, and this will play a crucial r\^ole in the sequel,
the process $(v_t)$ is a martingale.

Eldan and Lee introduce a perturbed version 
of the process $(X_t)$, which we now describe. 
From now on we fix $r> 1$ and we let
\[
T = \inf \{ t \in [0,1] , \ P_{1-t} f ( X_t ) > r \} \ \wedge 1  
\]
be the first time the process $(P_{1-t} f (X_t ))$ hits the value $r$ 
(with the convention that $T =1$ if it does not ever reach $r$). 
Now given $\delta >0$ we let $(X_t^\delta)$ be the process defined by
\[
X_t^\delta = X_t + \delta \int_0^{T\wedge t} v_s \, ds .
\]
Note that this perturbed process is still of the form Brownian motion plus drift:
\[
X^\delta_t = B_t + \int_0^t (1 + \delta \mathbf{1}_{ \{ s\leq T\} } ) v_s \, ds. 
\]
So letting $\mu^\delta$ be the law of $X_1^\delta$
and using~\eqref{entropy11} we get
\begin{equation}\label{entropy2}
\begin{split}
\mathrm H ( \mu^\delta \mid \gamma ) 
& \leq \frac 12 \E \left[ \int_0^1 (1+ \delta \, \mathbf 1_{ \{s\leq T\} } )^2 \vert v_s \vert^2 \, ds \right] \\
& =  \frac 12 \E \left[ \int_0^1 \vert v_s \vert^2 \, ds \right] 
+ \left( \delta + \frac{\delta^2}2 \right) \, \E\left[ \int_0^T \vert v_s \vert^2 \, ds \right] .
\end{split}
\end{equation}
\section{Proof of the main result}
The proof can be decomposed 
into two steps. Recall that $r$ is fixed 
from the beginning and that $X_1^\delta$ actually
depends on $r$ through the stopping time $T$. 

The first step is to prove that 
if $\delta$ is small then $\mu$ and $\mu^\delta$
are not too different. 
\begin{prop}\label{prop1}
Assuming~\eqref{hyp1}, we have 
\[
d_{TV} ( \mu , \mu^\delta ) \leq \delta \sqrt{ (\beta+1) \log r } ,
\] 
for every $\delta>0$,
where $d_{TV}$ denotes the total variation distance. 
\end{prop}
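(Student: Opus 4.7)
The plan is to apply Pinsker's inequality, which reduces the claim to bounding $\mathrm H(\mu^\delta \mid \mu)$ by something of order $\delta^2(\beta+1)\log r$. Since $d\mu/d\gamma_n = f$, the chain rule for relative entropy produces the identity
\[
\mathrm H(\mu^\delta \mid \mu) = \bigl[\mathrm H(\mu^\delta \mid \gamma_n) - \mathrm H(\mu \mid \gamma_n)\bigr] - \E\bigl[\log f(X_1^\delta) - \log f(X_1)\bigr].
\]
Subtracting \eqref{entropy1} from \eqref{entropy2} immediately bounds the first bracket by $(\delta + \delta^2/2)\,\E\int_0^T |v_s|^2\,ds$. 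The main task is then to show that the second term contributes a quantity close to $-\delta\,\E\int_0^T |v_s|^2\,ds$, so that the $O(\delta)$ contributions cancel and only an $O(\delta^2)$ error remains, carrying the $\beta$.

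This cancellation is produced by combining \eqref{hyp1} with the martingale property of $(v_s)$. The semi-concavity inequality
\[
\log f(X_1^\delta) - \log f(X_1) \geq \langle \nabla \log f(X_1), X_1^\delta - X_1\rangle - \tfrac{\beta}{2}\,|X_1^\delta - X_1|^2,
\]
combined with $\nabla \log f(X_1) = v_1$ and $X_1^\delta - X_1 = \delta \int_0^T v_s\,ds$, yields after taking expectations
\[
-\E\bigl[\log f(X_1^\delta) - \log f(X_1)\bigr] \leq -\delta\,\E\Bigl\langle v_1, \int_0^T v_s\,ds\Bigr\rangle + \frac{\beta\delta^2}{2}\,\E\Bigl|\int_0^T v_s\,ds\Bigr|^2.
\]
The decisive computation is the identity $\E\langle v_1, \int_0^T v_s\,ds\rangle = \E\int_0^T |v_s|^2\,ds$, which I would establish via Fubini, the martingale relation $\E[v_1 \mid \mathcal F_s] = v_s$, and the fact that $\mathbf 1_{\{s\leq T\}}$ is $\mathcal F_s$-measurable (here $(\mathcal F_s)$ denotes the natural filtration of $(B_s)$). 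Cauchy--Schwarz together with $T \leq 1$ controls the second piece by $\E|\int_0^T v_s\,ds|^2 \leq \E\int_0^T |v_s|^2\,ds$, and after the $O(\delta)$ terms cancel one arrives at
\[
\mathrm H(\mu^\delta \mid \mu) \leq \frac{\delta^2(1+\beta)}{2}\,\E\int_0^T |v_s|^2\,ds.
\]

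The final ingredient is $\E\int_0^T |v_s|^2\,ds \leq 2\log r$, which falls out of \eqref{fvt}: taking logarithm at time $T$ and then expectation kills the stochastic integral (a genuine martingale under the technical assumptions of Section~3), and $P_{1-T}f(X_T) \leq r$ by continuity and the definition of $T$. Pinsker then yields $d_{TV}(\mu,\mu^\delta) \leq \delta\sqrt{(1+\beta)\log r/2}$, which proves the claim. The step I expect to be the most delicate is the martingale-based cancellation of the $O(\delta)$ contributions: without it the entropy estimate would be $O(\delta \log r)$, giving a useless $O(\sqrt{\delta\log r})$ bound after Pinsker. Everything therefore hinges on the martingale property of $(v_s)$, exactly as the paper emphasizes at the end of Section~3.
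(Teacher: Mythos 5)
Your proposal is correct and follows essentially the same route as the paper: entropy-decomposition via the reference measure $\gamma_n$, the semi-concavity inequality for $\log f$ together with $v_1=\nabla\log f(X_1)$ and $X_1^\delta-X_1=\delta\int_0^T v_s\,ds$, the martingale identity $\E\langle v_1,\int_0^T v_s\,ds\rangle=\E\int_0^T|v_s|^2\,ds$ to cancel the $O(\delta)$ term, the bound $\E\int_0^T|v_s|^2\,ds\le 2\log r$ from \eqref{fvt} and optional stopping, and finally Pinsker. The only cosmetic difference is that the paper absorbs the Cauchy--Schwarz step (bounding $|\int_0^T v_s\,ds|^2$ by $\int_0^T|v_s|^2\,ds$) directly into display \eqref{convex}, while you carry it out separately; and you observe, correctly, that Pinsker actually gives the slightly sharper constant $\delta\sqrt{(\beta+1)\log r/2}$.
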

The second step is to argue 
that $f(X_1^\delta)$ tends to be bigger 
than $f(X_1)$. An intuition for this property 
is that the difference between $X_1^\delta$ and 
$X_1$ is somehow in the direction of $\nabla f (X_1 )$.
\begin{prop}\label{prop2}
Assuming~\eqref{hyp1}, we have 
\[
\prob \left( f(X_1^\delta) \leq r^{1+2\delta} \e^{-4} \right)
\leq \prob ( f (X_1) \leq r ) + (\beta+4) \delta^2 \log (r) ,
\] 
for all $\delta>0$. 
\end{prop}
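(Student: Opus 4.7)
The plan is to combine the hypothesis (\ref{hyp1}) with the It\^o identity (\ref{fvt}) at the stopping time $T$. As a first step, I would apply the semi-convexity of $\log f$ to the pair $(X_1, X_1^\delta)$, using the fact that $X_1^\delta = X_1 + \delta V$ with $V := \int_0^T v_s \, ds$. This produces
\[
\log f(X_1^\delta) \geq \log f(X_1) + \delta \langle v_1, V \rangle - \frac{\beta \delta^2}{2} |V|^2 ,
\]
where $v_1 = \nabla \log f(X_1)$. Since $\log f(X_1) > \log r$ on $\{f(X_1) > r\}$, on this event the ``bad'' event $\{f(X_1^\delta) \leq r^{1+2\delta}\e^{-4}\}$ is contained in the set where $2 \delta \log r - \delta \langle v_1, V \rangle + \tfrac{\beta \delta^2}{2} |V|^2 \geq 4$; it remains to show that $\langle v_1, V \rangle$ is close to $2\log r$ with controlled failure.

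This is where the martingale property of $(v_t)$ comes in. Applying It\^o's product formula to $\langle v_t, W_t \rangle$ with the bounded-variation process $W_t := V_{t \wedge T}$, and noting $dW_t = \mathbf{1}_{\{t \leq T\}} v_t \, dt$, I obtain the identity
\[
\langle v_1, V \rangle = \int_0^T |v_s|^2 \, ds + M , \qquad M := \int_0^1 \langle W_s, dv_s \rangle ,
\]
where $M$ is a mean-zero martingale. On $\{f(X_1) > r\} \subseteq \{T < 1\}$, identity (\ref{fvt}) at time $T$ reads $\log r = N_T + \tfrac{1}{2} \int_0^T |v_s|^2 \, ds$ with $N_T := \int_0^T \langle v_s, dB_s \rangle$, so that $\int_0^T |v_s|^2 \, ds = 2 \log r - 2 N_T$. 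Substituting, on $\{f(X_1) > r\}$ the bad event becomes contained in
\[
\bigl\{ 2 \delta N_T - \delta M + \tfrac{\beta \delta^2}{2} |V|^2 \geq 4 \bigr\} .
\]

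To finish, I would apply a union bound on a suitable splitting of the threshold $4$ combined with Markov/Chebyshev inequalities. Taking expectations in (\ref{fvt}) and using $\log P_{1-T} f(X_T) \leq \log r$ (by definition of $T$) yields $\E [\int_0^T |v_s|^2 \, ds ] \leq 2 \log r$; hence by the It\^o isometry $\E[N_T^2] \leq 2 \log r$ and by Cauchy--Schwarz $\E[|V|^2] \leq 2 \log r$. These estimates are enough to handle the $N_T$ and $|V|^2$ contributions and account for the $4 \delta^2 \log r$ and $\beta \delta^2 \log r$ pieces of the target bound $(\beta + 4) \delta^2 \log r$. The most delicate step is to obtain an $L^2$ bound of the form $\E[M^2] \leq 2 \log r$: the quadratic variation of $M$ is $\int_0^1 |\nabla^2 \log P_{1-s} f(X_s) \, W_s|^2 \, ds$, and hypothesis (\ref{hyp1}) controls this Hessian only from below. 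This is where I expect the main technical effort to lie, via a dedicated analysis of the volatility of the martingale $(v_t)$---perhaps by exploiting that the Hessians appear in a quadratic form integrated along the path, rather than through a crude operator-norm bound.
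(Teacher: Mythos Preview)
Your decomposition is essentially the same as the paper's: by the stochastic Fubini identity, your martingale term $M=\int_0^1 \langle W_s,dv_s\rangle$ equals $\int_0^T \langle v_1-v_s,v_s\rangle\,ds$, which is precisely what the paper isolates as the ``problematic'' term in its variable $Y$. Your handling of the $N_T$ and $|V|^2$ pieces by Chebyshev and Markov is also exactly what the paper does.

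The gap is in your treatment of $M$. You propose to close the argument via $\E[M^2]\leq 2\log r$, using the quadratic variation $\int_0^1 |\nabla^2\log P_{1-s}f(X_s)\,W_s|^2\,ds$; but as you yourself note, hypothesis~(\ref{hyp1}) only bounds this Hessian from below, and there is no apparent control from above along the path. The paper makes exactly this observation: ``A natural way to get a deviation bound would be to estimate its second moment but it is not clear to us how to do this.'' So the route you leave open is precisely the one the paper abandons.

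The paper's way around this is a change-of-measure trick, not an $L^2$ estimate. One forms the auxiliary variable
\[
Z=-\delta\int_0^T\langle v_s,dB_s\rangle+\delta\int_0^T\langle v_1-v_s,v_s\rangle\,ds-\frac{(\beta+1)\delta^2}{2}\int_0^T|v_s|^2\,ds,
\]
applies Girsanov to the drifted process $(X_t^\delta)$ to obtain $\E[f(X_1^\delta)D_1^\delta]=1$, and checks (using the same convexity inequality you started from) that $f(X_1^\delta)D_1^\delta\geq \e^{Z}$, hence $\E[\e^{Z}]\leq 1$. An elementary lemma (if $\E[\e^{Z}]\leq 1$ then $\prob(Z\leq -2)\leq -\E[Z]$) then gives $\prob(Z\leq -2)\leq (\beta+1)\delta^2\log r$. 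Since $Y$ differs from $Z$ only by terms you already control, this finishes the proof of $\prob(Y\leq -4)\leq (\beta+4)\delta^2\log r$. In short, the missing idea is to replace the second-moment bound on $M$ by an exponential-moment bound on a suitable combination containing $M$, obtained via Girsanov.
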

\begin{rem}
Note that both propositions use 
the convexity hypothesis~\eqref{hyp1}. 
\end{rem}
It is now very easy to prove Theorem~\ref{thm2}.
Since $X_1$ has law $\mu$, all we need to prove is
\[
\prob ( f(X_1) \in (r,\e r ] ) \leq C \, \frac{ \max(\beta,1) }{ \sqrt{ \log r }} . 
\]
We choose 
\[
\delta = \frac 5 {2 \log r} . 
\]
For this value of $\delta$, Proposition~\ref{prop2} gives
\[
\prob ( f ( X_1^\delta ) \geq \e \, r ) \leq \prob ( f ( X_1 ) \leq  r) + 
\frac {25} 4 \, \frac{\beta+4}{\log r} ,
\]
whereas Proposition~\ref{prop1} yields
\[
\begin{split}
\prob ( f ( X_1 ) \leq \e r ) 
& \leq \prob ( f(X_1^\delta) \leq \e r ) + d_{TV} ( \mu , \mu^\delta ) \\
& \leq \prob ( f(X_1^\delta) \leq \e r ) + \frac 52 \left( \frac{ \beta+1 }{ \log r } \right)^{1/2} .
\end{split}
\]
Combining the two inequalities we obtain 
\[
\prob ( f ( X_1 ) \leq \e \,  r  )
\leq 
\prob ( f ( X_1 ) \leq r ) +  
C \, \frac{ \max (\beta,1) }{ \sqrt{ \log r } } ,
\]
which is the result. 
\begin{rem}
We actually prove the slightly stronger statement:
\[
\prob ( f ( X_1 ) \in ( r , \e r ] )  
\leq C \max \left( \frac {\max(\beta,1)} {\log r } , 
\left( \frac {\max(\beta,1)} {\log r} \right)^{1/2} \right) . 
\]
\end{rem}
\section{Proof of the total variation estimate}
We actually bound the relative entropy of
$\mu^\delta$ with respect to $\mu$. 
Recall that $\log f$ is assumed to be weakly convex:
there exists $\beta\geq 0$ such that 
\begin{equation}\label{hyp11}
\nabla^2 \log f \geq - \beta \, \mathrm{id} ,
\end{equation}
pointwise. 
\begin{prop}
Assuming~\eqref{hyp11}, we have
\[
\mathrm H ( \mu^\delta \mid \mu )
\leq \delta^2 ( \beta +1 ) \log r , 
\]
for all $\delta>0$.
\end{prop}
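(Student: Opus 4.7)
The plan is to decompose the relative entropy via the identity $\mathrm H(\mu\mid\gamma_n)=\E[\log f(X_1)]$ (applied to both $\mu$ and $\mu^\delta$):
\[
\mathrm H(\mu^\delta\mid\mu)
=\bigl(\mathrm H(\mu^\delta\mid\gamma_n)-\mathrm H(\mu\mid\gamma_n)\bigr)-\E\bigl[\log f(X_1^\delta)-\log f(X_1)\bigr].
\]
The first bracket is controlled directly by~\eqref{entropy2} (after subtracting off $\mathrm H(\mu\mid\gamma_n)=\tfrac{1}{2}\E\int_0^1|v_s|^2\,ds$): it is at most $(\delta+\delta^2/2)\,\E\int_0^T|v_s|^2\,ds$. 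The whole proof then hinges on producing a matching lower bound for the second bracket so that the order-$\delta$ contributions cancel, leaving only $\delta^2$-terms.

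Setting $\Phi:=X_1^\delta-X_1=\delta\int_0^T v_s\,ds$, the semiconcavity hypothesis~\eqref{hyp11} yields the Taylor-type bound
\[
\log f(X_1^\delta)-\log f(X_1)\geq \nabla\log f(X_1)\cdot\Phi-\tfrac{\beta}{2}|\Phi|^2.
\]
The linear term matches the $\delta$-linear piece above \emph{exactly}, thanks to two structural facts from the preliminaries: first, $\nabla\log f(X_1)=v_1$ (the terminal value of the drift); second, $(v_t)$ is a martingale. Conditioning on $\mathcal{F}_s$ and using $\E[v_1\mid\mathcal{F}_s]=v_s$ together with the $\mathcal{F}_s$-measurability of $v_s\mathbf{1}_{\{s\leq T\}}$ gives
\[
\E[v_1\cdot\Phi]=\delta\int_0^1\E\bigl[|v_s|^2\,\mathbf{1}_{\{s\leq T\}}\bigr]\,ds=\delta\,\E\int_0^T|v_s|^2\,ds.
\]
The quadratic term is controlled by Cauchy--Schwarz and $T\leq 1$: $|\Phi|^2\leq \delta^2 T\int_0^T|v_s|^2\,ds\leq\delta^2\int_0^T|v_s|^2\,ds$.

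Substituting back, the $\delta$-linear contributions cancel and one is left with $\mathrm H(\mu^\delta\mid\mu)\leq \tfrac{(\beta+1)\delta^2}{2}\,\E\int_0^T|v_s|^2\,ds$. To close, take logarithms in~\eqref{fvt} and evaluate at the stopping time:
\[
\log P_{1-T}f(X_T)=\int_0^T\langle v_s,dB_s\rangle+\tfrac{1}{2}\int_0^T|v_s|^2\,ds.
\]
Since the stochastic integral has zero expectation (the technical assumptions on $f$ make $(v_t)$ a bounded martingale) and $P_{1-T}f(X_T)\leq r$ by continuity at the hitting time, taking $\E$ yields $\E\int_0^T|v_s|^2\,ds\leq 2\log r$ and the proposition follows. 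The main subtlety I anticipate is the exact order-$\delta$ cancellation: both the first-order Taylor term and the $\delta$-linear piece of~\eqref{entropy2} have to evaluate to precisely $\delta\,\E\int_0^T|v_s|^2\,ds$, and this identification depends crucially on the martingale property of $(v_t)$ and on the Föllmer identification $v_1=\nabla\log f(X_1)$.
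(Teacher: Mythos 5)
Your proof is correct and follows essentially the same route as the paper: you use the identity $\mathrm H(\mu^\delta\mid\mu)=\mathrm H(\mu^\delta\mid\gamma_n)-\int\log f\,d\mu^\delta$ (the paper's~\eqref{step1}), apply the semiconcavity inequality~\eqref{convex} together with Cauchy--Schwarz, invoke the martingale property of $(v_t)$ to make the order-$\delta$ terms cancel, and close with $\E\int_0^T|v_s|^2\,ds\leq 2\log r$ via~\eqref{fvt} and optional stopping. The only presentational differences are that you spell out the Cauchy--Schwarz step explicitly and phrase the decomposition by re-centering around $\mathrm H(\mu\mid\gamma_n)=\E[\log f(X_1)]$, which the paper does implicitly.
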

This yields Proposition~\ref{prop1}
by Pinsker's inequality. 
\begin{proof}
Observe that 
\begin{equation}\label{step1}
\mathrm H ( \mu^\delta \mid \mu ) = 
\mathrm H ( \mu^\delta \mid \gamma ) - \int_{\R^n} \log ( f ) \, \, d\mu^\delta .  
\end{equation}
Now~\eqref{hyp11} gives
\begin{equation}\label{convex}
\begin{split}
\log ( f )( X_1^\delta ) & \geq \log f ( X_1 ) 
+ \langle \nabla \log f ( X_1 ) , X_1^\delta - X_1 \rangle
- \frac \beta 2 \vert X_1^\delta - X_1 \vert^2 , \\
& \geq \log f ( X_1 ) + \delta \int_0^T \langle v_1 , v_s \rangle \, ds 
 - \frac{\beta \delta^2 } 2 \int_0^T \vert v_s \vert^2 \, ds , 
\end{split}
\end{equation}
almost surely. 
We shall use this inequality 
several times in the sequel. 
Recall that $X_1$ has law $\mu$ 
and that $X_1^\delta$ has law $\mu^\delta$.
Taking expectation in the previous inequality
and using~\eqref{step1} we get
\[
\begin{split}
\mathrm H ( \mu^\delta \mid \mu ) 
& \leq \mathrm H ( \mu^\delta \mid \gamma )  - \mathrm H ( \mu \mid \gamma ) \\
& - \delta \, \E \left[ \int_0^T \langle v_1 , v_s \rangle \, ds \right]
+  \frac {\beta \delta^2 } 2 \, 
\E \left[ \int_0^T \vert v_s \vert^2 \, ds \right] .
\end{split}
\]
Together with~\eqref{entropy1} and~\eqref{entropy2}
we obtain
\[
\mathrm H ( \mu^\delta \mid \mu ) 
\leq 
- \delta \, \E \left[ \int_0^T \langle v_1-v_s , v_s \rangle \, ds \right]
+  \frac {(1+\beta) \delta^2 } 2 \, 
\E \left[ \int_0^T \vert v_s \vert^2 \, ds \right] .
\]
Now since $(v_t)$ is a martingale
and $T$ a stopping time we have
\[
\E \left[ \langle v_1 , v_s \rangle \, \mathbf 1_{\{s\leq T\}} \right]
=  \E \left[ \vert v_s \vert^2  \mathbf 1_{\{s\leq T\}} \right]
\]
for all time $s\leq 1$. This shows that the first term 
in the previous inequality is $0$. To bound the second term,
observe that the definition of $T$ and the equality~\eqref{fvt}
imply that 
\[
\int_0^T \langle v_s , dB_s \rangle 
+ \frac 12 \int_0^T \vert v_s \vert^2 \, ds \leq \log r,
\]
almost surely. Since $(v_t)$ is a bounded drift, the process
$(\int_0^t \langle v_s, dB_s \rangle)$
is a martingale. Now $T$ is a bounded stopping time,
so by the optional stopping theorem
\[
\E \left[ \int_0^T \langle v_s , d B_s\rangle \right] = 0.
\]
Therefore, taking expectation in the previous inequality yields
\[
\E \left[ \int_0^T \vert v_s \vert^2 \, ds \right] \leq 2 \log r ,
\]
which concludes the proof. 
\end{proof}
\section{Proof of Proposition~\ref{prop2}}
The goal is to prove that 
\[
\prob \left( f(X_1^\delta) \leq r^{1+2\delta} \e^{-4} \right)
\leq \prob ( f (X_1) \leq r ) + \delta^2 (\beta+4) \log r.
\]
Obviously
\[
\prob \left( f(X_1^\delta) \leq r^{1+2\delta} \e^{-4} \right)
\leq
\prob ( f (X_1) \leq r ) +
\prob \left( f ( X_1^\delta) \leq r^{1+2\delta} \e^{-4} ; \ f(X_1) > r \right) 
\]
Now recall the inequality~\eqref{convex} coming for the weak convexity
of $\log f$ and rewrite it as
\[
\log f(X_1^\delta) \geq K_1 + 2 \delta K_T + Y
\]
where $(K_t)$ is the process defined by
\[
K_t = \log ( P_{1-t} ) (f) (X_t ) 
= \int_0^t \langle v_s , dB_s \rangle + \frac 12 \int_0^t \vert v_s \vert^2 \, ds ,
\]
and $Y$ is the random variable
\[
Y = -2\delta \int_0^T \langle v_s , d B_s \rangle 
 + \delta \int_0^T \langle v_1 - v_s  , v_s \rangle \, ds 
 - \frac{\beta \delta^2} 2 \int_0^T \vert v_s \vert^2 \, ds .  
\]
Recall that the stopping time $T$ is the first time the process
$(K_t)$ exceeds the value $\log r$ if it ever does,
and $T=1$ otherwise. In particular, if
\[
K_1 = \log f (X_1) > \log r 
\]
then $K_T = \log r$. So if $f(X_1) > r$
then 
\[
f(X_1^\delta) > r^{1+2\delta} \, \e^Y . 
\]
Therefore 
\[
\prob \left( f ( X_1^\delta) \leq r^{1+2\delta} \e^{-4} ; \ f(X_1) > r \right) 
\leq 
\prob ( Y \leq -4 ).
\]
So we are done if we can prove that 
\begin{equation}\label{ineqY}
\prob ( Y \leq -4 ) \leq 
(\beta+4) \delta^2 \log r . 
\end{equation}
There are three terms in the definition 
of $Y$. The problematic one is
\[
\delta \int_0^T \langle v_1 - v_s  , v_s \rangle \, ds . 
\] 
We know from the previous section 
that it has expectation $0$.
A natural way to get a deviation bound would 
be to estimate its second moment but it 
is not clear to us how to do this. 
Instead we make an complicated detour.
\begin{lem}
\label{bizarre}
Let $Z$ be an integrable random 
variable satisfying $\E [\e^Z]\leq 1$.
Then
\[
\prob ( Z \leq -2 ) \leq - \E[Z]  . 
\] 
\end{lem}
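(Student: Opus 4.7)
The plan is to find a smooth scalar inequality that upper bounds the indicator $\mathbf{1}_{z \le -2}$ by something whose expectation is controlled by the two available quantities $\E[e^Z]$ and $\E[Z]$, and then integrate against the law of $Z$.

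Concretely, I would prove the pointwise bound
\[
\mathbf 1_{\{z \le -2\}} \;\le\; \e^z - z - 1 \qquad \text{for every } z \in \R.
\]
This splits into two elementary cases. When $z > -2$, the left hand side is $0$ and the right hand side is nonnegative by the standard convexity inequality $\e^z \ge 1 + z$. When $z \le -2$, the left hand side equals $1$, and the claim becomes $\e^z - z \ge 2$; since $-z \ge 2$ and $\e^z > 0$, this is immediate.

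Once this pointwise inequality is in place, I would apply it with $z = Z$ and take expectations, using linearity and the hypothesis $\E[\e^Z] \le 1$:
\[
\prob(Z \le -2) \;\le\; \E[\e^Z] - \E[Z] - 1 \;\le\; -\E[Z].
\]
This gives the conclusion.

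There is no real obstacle here; the only creative step is guessing the right comparison function $\e^z - z - 1$, which is natural because it is the first nonnegative quantity one can build from the two functionals $\E[\e^Z]$ and $\E[Z]$ that are at our disposal, and because it automatically dominates the indicator exactly at the threshold $-2$ since $\e^{-2} + 1 > 1$.
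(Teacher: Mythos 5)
Your proof is correct and rests on the same two elementary facts as the paper's ($\e^z \ge 1+z$ everywhere, and $-z \ge 2$ on $\{z \le -2\}$), arriving at the identical intermediate bound $\prob(Z\le -2)\le \E[\e^Z]-\E[Z]-1$. The only difference is presentational: you package the estimate as a single pointwise comparison $\mathbf 1_{\{z\le -2\}}\le \e^z-z-1$ and integrate, whereas the paper decomposes the expectation over the events $\{Z>-2\}$ and $\{Z\le -2\}$ and bounds each piece; your version is arguably the cleaner formulation of the same argument.
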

\begin{rem} Note that $\E [ Z ] \leq 0$ 
by Jensen's inequality.  
\end{rem}
\begin{proof}
Simply write 
\[
\begin{split}
\E\left[ \e^Z \right] 
& \geq \E\left[ \e^Z \, \mathbf 1_{ \{ Z>-2\} }\right]  \\
& \geq \E\left[ (Z+1) \, \mathbf 1_{ \{ Z>-2\} }\right] \\
& = \E [Z] - \E\left[ Z \, \mathbf 1_{ \{ Z\leq -2\} }\right] + 1 - \prob ( Z \leq - 2 ) \\
& \geq \E [ Z ] + \prob ( Z \leq -2 ) + 1 . 
\end{split}
\] 
So if $\E [ \e^Z ] \leq 1$ then $\prob ( Z \leq -2 ) \leq -\E[Z]$.
\end{proof}
\begin{lem}\label{lemZ}
Let $Z$ be the variable
\[
Z =  - \delta \int_0^T \langle v_s , d B_s \rangle 
+ \delta \int_0^T \langle v_1 - v_s , v_s \rangle \, ds
- \frac{(\beta +1) \delta^2} 2 \int_0^T \vert v_s \vert^2 \, ds . 
\]
Then 
\[
\prob ( Z \leq -2 ) \leq \delta^2 (\beta +1) \log r . 
\]
\end{lem}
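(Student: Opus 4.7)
The plan is to verify the two hypotheses of Lemma~\ref{bizarre} for $Z$, namely $\E[e^Z]\leq 1$ and $-\E[Z]\leq(\beta+1)\delta^2\log r$. The second estimate should be routine. Under the standing smoothness assumptions the drift $(v_t)$ is bounded, so optional stopping at the bounded time $T$ gives $\E\!\left[\int_0^T\langle v_s,dB_s\rangle\right]=0$; and the martingale property of $(v_t)$ gives $\E\!\left[\langle v_1,v_s\rangle\mathbf 1_{\{s\leq T\}}\right]=\E\!\left[|v_s|^2\mathbf 1_{\{s\leq T\}}\right]$ for every $s$, hence $\E\!\left[\int_0^T\langle v_1-v_s,v_s\rangle\,ds\right]=0$. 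Only the quadratic term would contribute to $\E[Z]$, and combining with the bound $\E\!\left[\int_0^T|v_s|^2\,ds\right]\leq 2\log r$ from the previous section would yield the claim.

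The real work will be showing $\E[e^Z]\leq 1$. My strategy would be to condition on $\mathcal F_T$ to isolate the single remaining source of randomness, namely $v_1=\nabla\log f(X_1)$. Writing $U=\int_0^T v_s\,ds$ and expanding $\langle v_1-v_s,v_s\rangle=\langle v_1,v_s\rangle-|v_s|^2$, I would decompose $Z=A+\delta\langle v_1,U\rangle$ with
\[
A=-\delta\int_0^T\langle v_s,dB_s\rangle-\delta\int_0^T|v_s|^2\,ds-\frac{(\beta+1)\delta^2}{2}\int_0^T|v_s|^2\,ds
\]
being $\mathcal F_T$-measurable. The process $(X_t)$ is a Doob $h$-transform of Brownian motion with $h(t,x)=P_{1-t}f(x)$, so conditionally on $\mathcal F_T$ the law of $X_1$ has density $x\mapsto f(x)\varphi_{1-T}(x-X_T)/P_{1-T}f(X_T)$ with respect to Lebesgue measure, where $\varphi_t$ denotes the Gaussian density on $\R^n$ with covariance $t\,\mathrm{id}$. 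Combining this with the pointwise convexity estimate $e^{\delta\langle\nabla\log f(x),U\rangle}\leq (f(x+\delta U)/f(x))\,e^{\beta\delta^2|U|^2/2}$, the factor $f(x)$ would cancel and the shift $y=x+\delta U$ would produce
\[
\E\bigl[e^{\delta\langle v_1,U\rangle}\bigm|\mathcal F_T\bigr]\leq e^{\beta\delta^2|U|^2/2}\,\frac{P_{1-T}f(X_T+\delta U)}{P_{1-T}f(X_T)}.
\]

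From here the algebra should collapse. Noting that $X_T+\delta U=X_T^\delta$, that $|U|^2\leq\int_0^T|v_s|^2\,ds$ by Cauchy--Schwarz, and that $P_{1-T}f(X_T)=\exp\!\left(\int_0^T\langle v_s,dB_s\rangle+\tfrac12\int_0^T|v_s|^2\,ds\right)$ by~\eqref{fvt}, multiplying by $e^A$ would rearrange the exponents into a Doléans--Dade exponential with drift $-(1+\delta)v_s\mathbf 1_{\{s\leq T\}}$:
\[
\E[e^Z]\leq\E\!\left[\exp\!\Bigl(-(1+\delta)\int_0^T\langle v_s,dB_s\rangle-\frac{(1+\delta)^2}{2}\int_0^T|v_s|^2\,ds\Bigr)\,P_{1-T}f(X_T^\delta)\right].
\]
A Girsanov change of measure with respect to this exponential would introduce a new Brownian motion $\widehat B_t:=B_t+(1+\delta)\int_0^{t\wedge T}v_s\,ds$; a direct check gives $\widehat B_T=X_T^\delta$, so the right-hand side equals the new-measure expectation of $P_{1-T}f(\widehat B_T)$. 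Since $P_{1-t}f$ solves the backward heat equation, $(P_{1-t}f(\widehat B_t))_{t\in[0,1]}$ is a martingale of initial value $P_1f(0)=\int f\,d\gamma_n=1$, and optional stopping at $T$ would complete the argument. The main obstacle is spotting that the coefficient $1+\delta$ in the auxiliary Brownian motion is precisely what is needed for $\widehat B_T=X_T^\delta$; once the exponents are arranged this way, the remainder is a textbook martingale calculation.
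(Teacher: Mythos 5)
Your proof is correct, and it reaches the same Girsanov martingale as the paper does, but the bookkeeping is genuinely reorganized. The paper applies the convexity bound~\eqref{convex} and the representation~\eqref{fvt} to derive the purely pointwise inequality $f(X_1^\delta)\,D_1^\delta \geq \e^Z$, where $D_1^\delta$ is the full Radon--Nikodym density on $[0,1]$; taking expectations and invoking $\E[f(X_1^\delta)D_1^\delta]=\E[f(B_1)]=1$ finishes the job. You instead first integrate out the terminal randomness $v_1 = \nabla\log f(X_1)$ conditionally on $\mathcal F_T$, using the explicit Doob $h$-transform form of the conditional law of $X_1$. After the shift $y=x+\delta U$ and the Cauchy--Schwarz step $|U|^2\leq\int_0^T|v_s|^2\,ds$, you are left with a Girsanov factor supported only on $[0,T]$ multiplied by $P_{1-T}f(X_T^\delta)$; the change of measure identifies $X_T^\delta$ as the stopped auxiliary Brownian motion $\widehat B_T$, and optional stopping of the space--time harmonic martingale $P_{1-t}f(\widehat B_t)$ closes the loop. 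The two routes are algebraically equivalent (your $M_T\cdot P_{1-T}f(X_T^\delta)$ is precisely $\E[f(X_1^\delta)D_1^\delta\mid\mathcal F_T]$), but yours trades the paper's ``tedious but elementary computation'' behind the pointwise inequality for the structural input that $(X_t)$ is an $h$-transform of Brownian motion, which makes the cancellation of $f$ transparent. The paper's version is shorter on the page; yours arguably explains better \emph{why} the exponent collapses to a Dol\'eans--Dade exponential with coefficient $1+\delta$. Both proofs rest on the same three pillars: Lemma~\ref{bizarre}, the entropy bound $\E[\int_0^T|v_s|^2\,ds]\leq 2\log r$, and a Girsanov change of measure.
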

\begin{proof}
As we have seen before 
the first two terms in the 
definition of $Z$ have expectation $0$
and 
\[
\E [ Z ] = -  \frac{(\beta+1) \delta^2} 2 \, 
\E\left[ \int_0^T \vert v_s \vert^2 \, ds \right]
\geq -  \delta^2  (\beta+1) \log r . 
\]
By Lemma~\ref{bizarre} it is enough 
to show that $\E [ \e^{Z} ] \leq 1$.
To do so, we use the Girsanov change of measure formula. 
The process $(X_t^\delta)$
is of the form Brownian motion plus drift: 
\[
\begin{split}
X_t^\delta 
& = X_t + \delta \int_0^{T\wedge t} v_s \, ds \\
& =  B_t + \int_0^t (1 + \delta \mathbf{1}_{ \{ s\leq T\} } ) v_s \, ds ,
\end{split}
\]
Note also that the drift term is bounded. 
Therefore, Girsanov's formula applies,
see for instance~\cite[chapter 6]{liptser}
(beware that the authors oddly use the letter $M$
to denote expectation). 
The process $(D^\delta_t)$ defined by
\[
D^\delta_t 
= \exp \left( - \int_0^t (1+ \delta \mathbf{1}_{\{s\leq T\}} ) \langle v_s , d B_s \rangle 
- \frac 12 \int_0^t \left\vert (1+\mathbf{1}_{\{s\leq T\}}) v_s \right\vert^2 \, ds\right)
\]
is a non-negative martingale of expectation $1$ and 
under the measure $\mathsf Q^{\delta}$ defined by
\[
d \mathsf Q^\delta = D^\delta_1 \, d \mathsf P
\]
the process $(X_t^\delta)$ is a standard Brownian motion. 
In particular 
\[
\E [ f ( X_1^\delta ) D_1^\delta ]
= \E [ f ( B_1 ) ] = 1 .
\]
Now we use inequality~\eqref{convex} once again.
A tedious but elementary computation shows that 
it gives exactly
\[
f( X_1^\delta ) \, D_1^\delta \geq \e^{Z} . 
\]
Therefore $\E [ \e^{Z} ] \leq 1$, which concludes the proof. 
\end{proof}
We now prove inequality~\eqref{ineqY}. The idea being that
the annoying term in $Y$ is handled by the previous lemma. 
Observe that 
\[
Y = Z - \delta \int_0^T \langle v_s , d B_s \rangle - \frac {\delta^2} 2 \int_0^T \vert v_s \vert^2 \, ds.  
\]
So
\[
\begin{split}
\prob ( Y \leq - 4 ) \leq \prob ( Z \leq - 2 ) 
& + \prob \left( \delta \int_0^T \langle v_s , d B_s \rangle \geq 1 \right) \\  
& +  \prob \left( \frac{\delta^2}2 \int_0^T \vert v_s \vert^2 \, ds  \geq 1 \right) .  
\end{split}
\]
Recall that $\int_0^T \langle v_s , d B_s \rangle$ has mean $0$ and observe that 
\[
\E \left[ \left( \delta \int_0^T \langle v_s , d B_s \rangle \right)^2 \right]
= \delta^2 \E \left[ \int_0^T \vert v_s \vert^2 \, ds \right]
\leq 2 \delta^2 \log r. 
\]
So by Tchebychev inequality
\[
\prob \left( \delta \int_0^T \langle v_s , d B_s \rangle \geq 1 \right) \leq 2 \delta^2 \log r. 
\]
Similarly by Markov inequality
\[
\prob \left( \frac{\delta^2}2 \int_0^T \vert v_s \vert^2 \, ds  \geq 1 \right) \leq \delta^2 \log r .
\]
Putting everything together we get~\eqref{ineqY}, which concludes the proof. 

\end{document}